 \newtheorem{thm}{Theorem}[section]
 \newtheorem{lem}[thm]{Lemma}
 \theoremstyle{definition}
 \theoremstyle{theorem}
 \newtheorem{rem}[thm]{Remark}
 \theoremstyle{claim}
 \numberwithin{equation}{section}
\def\R{\mathbb{R}^{N_1}}
\def\et{\tilde{e}}
\def\th{\tilde{h}}
\def\bn{\overline{\nabla}}
\def\la{\lambda}
\def\ep{\varepsilon}
\def\l{\langle}
\def\r{\rangle}
\def\tr{\mbox{tr}}
\newcommand{\s}{\mathbb{S}}
\numberwithin{equation}{section}
\newcounter{rom}
\renewcommand{\therom}{(\roman{rom})}
{\end{list}}
\title{On stable compact minimal submanifolds of Riemannian product manifolds}
\begin{document}

\author{Hang Chen}
\address {Department of Mathematical Sciences\\Tsinghua
University, Beijing 100084, People's Republic of China.}
\email{chenhang08@mails.tsinghua.edu.cn}

\author{Xianfeng Wang}
\address {School of Mathematical Sciences and LPMC\\Nankai
University, Tianjin 300071, People's Republic of China.}
\email {wangxianfeng@nankai.edu.cn}
\date{September 27, 2012}
\thanks {The first author was supported in part by NSFC Grant No. 11271214, The second author was supported in part by NSFC Grant No. 11171175, NSFC Grant No. 11201243 and ``the Fundamental Research Funds for the Central Universities".}

\keywords {stability, minimal submanifolds, Riemannian product manifold, $\delta$-pinched Riemannian manifold.}
\subjclass[2010]{Primary 53C40, 53C42}


\begin{abstract}
In this paper, we prove a classification theorem for the stable compact minimal submanifolds of
the Riemannian  product of an $m_1$-dimensional ($m_1\geq3$) hypersurface $M_1$ in the Euclidean space  and any Riemannian manifold $M_2$, when the sectional curvature $K_{M_1}$ of $M_1$ satisfies $\frac{1}{\sqrt{m_1-1}}\leq K_{M_1}\leq 1.$ This gives a generalization to the results of F. Torralbo and F. Urbano \cite{TU10}, where they obtained a classification theorem for  the stable minimal submanifolds of the Riemannian  product of a sphere  and any Riemannian manifold.
In particular, when the ambient space is an $m$-dimensional ($m\geq3$) complete  hypersurface $M$ in the Euclidean space,  if the sectional curvature $K_{M}$ of $M$ satisfies $\frac{1}{\sqrt{m+1}}\leq K_{M}\leq 1$, then we conclude that there exist no stable compact minimal
submanifolds in $M$.
\end{abstract}

\maketitle

\section{Introduction}
It is well-known that minimal submanifolds in a Riemannian manifold are
the critical points of the volume functional.
It is natural and important to study whether a given minimal submanifold  is stable or not, that is,
the considered minimal submanifold attains a local minimum of the volume functional or not.
In fact, we call a compact minimal submanifold $\Sigma$ in a Riemannian manifold $M$ \textit{stable}
if the second variation of the volume is nonnegative for every deformation of $\Sigma$.
The existence or non-existence of a stable minimal
submanifold is very closely related to the topological and Riemannian structures of the ambient space.
In this setting, Simons \cite{Simon68}  proved the following remarkable result on the non-existence of the stable minimal
submanifolds in the sphere.

\begin{thm}[Simons, \cite{Simon68}]\label{thm1.1}
There exist no stable compact minimal submanifolds in the Euclidean sphere $\mathbb{S}^n$.
\end{thm}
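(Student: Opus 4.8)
The plan is to exploit the isometric embedding $\mathbb{S}^n \subset \mathbb{R}^{n+1}$ in order to manufacture, out of the constant vector fields of the ambient Euclidean space, a natural family of test variations for any putative stable compact minimal submanifold $\Sigma^k \subset \mathbb{S}^n$, and then to show that the \emph{average} of the second variation over this family is strictly negative. This forces at least one member of the family to have negative second variation, contradicting stability.

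First I would record the second variation (index) form for a normal field $V$ along a compact minimal $\Sigma^k$ sitting in a space of constant curvature $1$. Since the ambient curvature tensor is $\bar R(X,Y)Z = \langle Y,Z\rangle X - \langle X,Z\rangle Y$, substituting and using $V \perp T\Sigma$ collapses the curvature term to $k|V|^2$, giving
\begin{equation*}
Q(V,V) = \int_\Sigma \left( |\nabla^\perp V|^2 - k|V|^2 - |A_V|^2 \right) dv,
\end{equation*}
where $\nabla^\perp$ is the normal connection of $\Sigma$ in $\mathbb{S}^n$, $A$ its second fundamental form, and $|A_V|^2 = \sum_{i,j}\langle A(e_i,e_j),V\rangle^2$. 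The positive ambient curvature thus enters $Q$ with a destabilizing sign, which is the whole point.

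Next, for each fixed $a \in \mathbb{R}^{n+1}$ I would take $V_a$ to be the orthogonal projection of $a$ onto the normal bundle $N\Sigma$ of $\Sigma$ in $\mathbb{S}^n$, and evaluate $Q(V_a,V_a)$. The heart of the argument is to sum the three integrands over an orthonormal basis $e_1,\dots,e_{n+1}$ of $\mathbb{R}^{n+1}$. Parseval's identity for the projection onto $N\Sigma$ gives $\sum_A |V_{e_A}|^2 = n-k$; the same identity applied to the projection of $A(e_i,e_j)$ onto $\mathbb{R}^{n+1}$, together with $A(e_i,e_j)\in N\Sigma$, gives $\sum_A |A_{V_{e_A}}|^2 = |A|^2$. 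The remaining and most delicate identity is $\sum_A |\nabla^\perp V_{e_A}|^2 = |A|^2$: differentiating the decomposition $a = a^{\top} + a^{N} + \langle a,x\rangle x$ along $\Sigma$, invoking the Gauss–Weingarten relations for the two nested immersions $\Sigma \subset \mathbb{S}^n \subset \mathbb{R}^{n+1}$, and projecting onto $N\Sigma$ expresses $\nabla^\perp_{e_i} V_a$ in terms of the shape operator contracted against $a^{\top}$; a second use of $\sum_A \langle e_A,E_j\rangle\langle e_A,E_l\rangle = \delta_{jl}$ then produces exactly $|A|^2$.

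Combining the three identities, the crucial cancellation of the two $|A|^2$ terms leaves
\begin{equation*}
\sum_{A=1}^{n+1} Q(V_{e_A},V_{e_A}) = \int_\Sigma \left( |A|^2 - k(n-k) - |A|^2 \right) dv = -k(n-k)\,\mathrm{Vol}(\Sigma) < 0,
\end{equation*}
since $1 \le k \le n-1$ for a proper compact submanifold. Hence $Q(V_{e_A},V_{e_A})<0$ for at least one $A$, and $\Sigma$ cannot be stable. The step I expect to be the main obstacle is precisely the normal-connection identity $\sum_A |\nabla^\perp V_{e_A}|^2 = |A|^2$: it requires carefully separating the tangential and normal contributions that appear when a constant ambient vector is differentiated along the doubly-nested submanifold, and it is exactly this term that must conspire to cancel the shape-operator term so that the negative curvature contribution $-k(n-k)$ survives.
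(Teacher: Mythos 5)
Your proof is correct, and it is essentially the same argument that underlies the paper's whole framework: the paper (citing Simons) builds its Section~2 machinery on exactly this idea of taking the normal projections $N_{E_A}$ of a parallel orthonormal frame of the ambient Euclidean space as test sections and averaging the index form, and its Lemma~\ref{lem2.1} together with Remark after it (the case where $M_2$ vanishes and the ambient is the totally umbilical sphere) specializes to precisely your computation $\sum_A Q(V_{E_A},V_{E_A})=-k(n-k)\,\mathrm{Vol}(\Sigma)<0$. The identities you single out, in particular $\nabla^{\perp}_{e_i}V_a=-A(e_i,a^{\top})$ and the resulting cancellation of the two $|A|^2$ terms, check out and match the paper's equations \eqref{tan}--\eqref{sum}.
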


Lawson and Simons \cite{LS73} classified all the  stable compact minimal submanifolds of the complex projective space. Ohnita \cite{O86m} completed
the classification of  the stable compact minimal submanifolds in all the other compact rank one symmetric spaces (the real
projective space $P^n(\mathbb{R})$,  the quaternionic projective space $P^n(\mathbb{H})$
and the Cayley projective space $P^2(Cay)$). Besides,  there have been many works on the stability, instability and the index of
minimal  submanifolds in some other different ambient Riemannian spaces (see \cite{FC}, \cite{FCS}, \cite{O}, \cite{R}, \cite{U90}, \cite{U93}, \cite{U07}, etc.). However, among these results, only a few particular situations have been considered in arbitrary codimensional case.

It is remarkable that very recently Torralbo and Urbano (see \cite{TU10}) proved a classification theorem for the stable compact minimal submanifolds of the Riemannian product
of a sphere $\mathbb{S}^{m}(r)$ and any Riemannian manifold $M$.
They proved that

\begin{thm}[Torralbo and Urbano, \cite{TU10}]\label{ThmTU}
Let $M$ be any Riemannian manifold and  $\Phi=(\phi,\psi): \Sigma \rightarrow \s^{m}(r) \times M$  be a minimal immersion of a compact $n$-manifold  $\Sigma$, $n\geq 2$, satisfying either $m\geq 3$ or $m=2$ and $\Phi$ is a hypersurface. Then, $\Phi$ is stable if and only if
\begin{enumerate}
	\item$\Sigma=\s^{m}(r)$ and $\Phi(\Sigma)$ is a slice $\s^{m}(r)\times\{q\}$ with $q$ a point of $M$.
	\item$\Sigma$ is a covering of $M$ and $\Phi(\Sigma)$ is a slice $\{p\}\times M$ with $p$ a point of $\s^m(r)$.
	\item$\psi:\Sigma\rightarrow M$ is a stable minimal submanifold and $\Phi(\Sigma)$ is $\{p\}\times \psi(\Sigma)$ with $p$ a point of $\s^m(r)$.
	\item$\Sigma=\s^{m}(r)\times \hat{\Sigma}$, $\Phi=Id\times\psi$, and $\psi:\hat{\Sigma}\rightarrow M$ is a stable minimal submanifold.
\end{enumerate}
\end{thm}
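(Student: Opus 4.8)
The plan is to prove the Torralbo–Urbano classification by analyzing the second variation formula using the special test vector fields coming from the constant vector fields of the ambient Euclidean space in which the sphere sits. This is the technique pioneered by Simons and refined in the product setting.
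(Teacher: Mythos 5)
Your proposal names the right family of test sections --- the normal parts $N_U$ of constant vectors $U$ of the Euclidean space $\mathbb{R}^{m+1}\supset\s^m(r)$, exactly the device used in this paper (and going back to Simons and Lawson--Simons) --- but it stops at the statement of strategy. Nothing that actually constitutes a proof is present, and the missing steps are precisely where all the work lies. Concretely, you would need: (a) the computation of $J N_U$ for such test sections and of the averaged quantity $F=-\sum_{A}\langle N_{E_A},JN_{E_A}\rangle$ over an orthonormal basis $\{E_A\}$ of $\mathbb{R}^{m+1}$, which for the sphere simplifies because $\s^m(r)$ is totally umbilical in $\mathbb{R}^{m+1}$, so $B_1(X,Y)=-\tfrac{1}{r}\langle X,Y\rangle\nu$ and $F$ becomes an explicit algebraic expression in the product structure $P$, i.e.\ in the matrices $A=(\langle e_i,Pe_j\rangle)$ and $B=(\langle e_\alpha,Pe_\beta\rangle)$ together with the trace identities $\tr A+\tr B=2m-n-p$; (b) the sign argument: $F\geq 0$ pointwise, while stability forces $\int_\Sigma F\,d\Sigma\leq 0$, hence $F\equiv 0$; (c) the equality analysis showing that $F\equiv 0$ pins down $A$ to one of three algebraic types, $A=I_n$ with $n=m$, $A=-I_n$, or $A=-I_{n-m}\oplus I_m$; and (d) the geometric translation of each algebraic type into the four cases of the statement, which is not a formality --- it uses that $P$ is parallel, so the $\pm 1$ eigendistributions of $P$ restricted to $T\Sigma$ are parallel, and a de Rham--type splitting plus analysis of $d\phi$ and $d\psi$ to identify slices, coverings, and product immersions.

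Two further gaps that no amount of "second variation with Euclidean test fields" automatically closes: first, the theorem is an equivalence, so you must also prove the converse, namely that each of the four families (slices, coverings of $M$, graphs over stable minimal $\psi(\Sigma)$, and products $\s^m(r)\times\hat\Sigma$ with $\psi$ stable) is in fact stable; this requires a separate argument, e.g.\ that $\Phi_1\times\Phi_2$ is stable if and only if each factor is. Second, the statement includes the borderline case $m=2$ when $\Phi$ is a hypersurface, which the averaged test-field inequality alone does not handle (this is exactly why the dimension restriction $m\geq 3$, or $m_1\geq 3$ in the generalization proved in this paper, appears); that case needs an additional argument special to codimension one. As it stands, your proposal is a correct table of contents for the proof, not a proof.
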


The motivation of this paper is to  generalize Torralbo and Urbano's results to a Riemannian product of a $\delta$-pinched hypersurface $M_1$ in the Euclidean space and an arbitrary
Riemannian manifold $M_2$.  Note that
a Riemannian manifold $M$ is called $\delta$-pinched ($0<\delta\leq 1$) if the sectional curvature satisfies $\delta a\leq K_M\leq a$ everywhere for some positive number $a$, and one may take $a=1$ without loss of generality. Perhaps one of the most surprising facts in this paper is that our $\delta$ is
 a strictly monotonically decreasing function of $m_1$ (=  $\dim M_1$), and $\delta$ converges to $0$ as $m_1$ tends to infinity.

More precisely, we prove the following classification theorem for the stable compact minimal submanifolds of
the Riemannian  product of a hypersurface $M_1$ in the Euclidean space  and any Riemannian manifold $M_2$, with arbitrary codimensions.

\begin{thm}\label{thm1.4}
Let $\Phi=(\phi, \psi): \Sigma\to M=M_1\times M_2$  be a minimal immersion of an $n$-dimensional $(n\geq 2)$ compact manifold  $\Sigma$ into $M$,
where $M_1$ is a complete connected hypersurface in $\mathbb{R}^{m_1+1} (m_1\geq3)$ and $M_2$ is any Riemannian manifold. Assume that the sectional curvature $K_{M_1}$  of $M_1$ satisfies
$$\frac{1}{\sqrt{m_1-1}}\leq K_{M_1}\leq1.$$
Then, $\Phi$ is stable if and only if
\begin{enumerate}
\item$\Sigma=M_1$ and $\Phi(\Sigma)$ is a slice $M_1\times \{p_2\}$ with $p_2$ a point of $M_2$.
\item$\Sigma$ is a covering of $M_2$ and $\Phi(\Sigma)$ is a slice $\{p_1\}\times M_2$ with $p_1$ a point of $M_1$.
\item$\psi: \Sigma\to M_2$ is a stable minimal submanifold and $\Phi(\Sigma)$ is $\{p_1\}\times\psi(\Sigma)$ with $p_1$ a point of $M_1$.
\item$\Sigma=M_1\times\hat{\Sigma}$, $\Phi=Id\times\psi$, and $\psi: \hat{\Sigma}\to M_2$ is a  stable minimal submanifold.
\end{enumerate}

In particular, when the ambient space is  an $m$-dimensional $(m\geq 3)$ complete  hypersurface $M$  in  $\mathbb{R}^{m+1}$, if the sectional curvature $K_M$ of $M$satisfies
$$\frac{1}{\sqrt{m+1}}\leq K_{M}\leq 1 ,$$ then there exist no stable compact minimal submanifolds in $M$.
\end{thm}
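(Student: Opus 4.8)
The plan is to prove both directions by following and adapting the test-field strategy of Torralbo and Urbano (Theorem \ref{ThmTU}), exploiting that $M_1$ is a hypersurface of $\mathbb{R}^{m_1+1}$ to manufacture variations out of the constant (parallel) vector fields of the ambient Euclidean space. The ``if'' direction is comparatively routine: each of the configurations (1)--(4) is either a totally geodesic slice or a Riemannian product with an $M_1$-factor, so its Jacobi operator splits along the product, and stability reduces either to the stability of $\psi$ in $M_2$ or to the positivity forced by the curvature of $M_1$ (this is where $K_{M_1}\geq \tfrac{1}{\sqrt{m_1-1}}>0$ enters, in the spirit of Simons' Theorem \ref{thm1.1}). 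I would therefore devote the argument to the ``only if'' direction. Fix an orthonormal basis $a_1,\dots,a_{m_1+1}$ of $\mathbb{R}^{m_1+1}$ and let $N$ be a unit normal of $M_1$ with shape operator $S$. Along $M_1$ each $a_\alpha$ splits as $a_\alpha=T_\alpha+f_\alpha N$ with $T_\alpha$ tangent and $f_\alpha=\langle a_\alpha,N\rangle$; since the $a_\alpha$ are parallel one gets the structure equations $\nabla_X T_\alpha=f_\alpha SX$ and $Xf_\alpha=-\langle SX,T_\alpha\rangle$, together with the completeness relations $\sum_\alpha f_\alpha^2=1$, $\sum_\alpha f_\alpha T_\alpha=0$, and $\sum_\alpha T_\alpha\otimes T_\alpha=\mathrm{Id}_{TM_1}$. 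I extend each $T_\alpha+f_\alpha N$ to $M=M_1\times M_2$ so as to be tangent to the $M_1$-factor and constant along $M_2$, restrict to the compact minimal $\Sigma^n$, and take the $\Sigma$-normal component $V_\alpha$ as the test field in the second variation (stability) form $I(\cdot,\cdot)$.

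The heart of the matter is the computation of the total second variation $\sum_\alpha I(V_\alpha,V_\alpha)$. Three ingredients combine here. First, the product structure kills all cross-curvature, so the ambient curvature term only charges the $M_1$-directions; second, the Gauss equation for $M_1\subset\mathbb{R}^{m_1+1}$ expresses the curvature of $M_1$ entirely through $S$, so that the curvature term of $I$ becomes, after summing in the tangent frame of $\Sigma$, an algebraic expression in $S$; third, the structure equations turn $\nabla^\perp V_\alpha$ into algebraic expressions in $S$ and $f_\alpha$. Using the completeness relations to collapse the $\alpha$-sums into pointwise traces, the minimality $\tr_\Sigma A=0$, and an integration by parts, I expect the whole sum to reduce to
$$\sum_\alpha I(V_\alpha,V_\alpha)=-\int_\Sigma F\,dV,$$
where $F$ is a quadratic expression in the principal curvatures $\kappa_1,\dots,\kappa_{m_1}$ of $M_1$ and in the second fundamental form of $\Sigma$.

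The decisive step is controlling the sign of $F$ via the pinching. By the Gauss equation $K_{M_1}(e_i,e_j)=\kappa_i\kappa_j$, the hypothesis $\tfrac{1}{\sqrt{m_1-1}}\leq K_{M_1}\leq1$ forces all principal curvatures to share one sign and, choosing three distinct indices (here $m_1\geq3$ is used), to obey $\kappa_i^2\leq 1/\inf_{j\neq k}(\kappa_j\kappa_k)=\sqrt{m_1-1}$. The constant $\tfrac{1}{\sqrt{m_1-1}}$ is calibrated precisely so that the resulting bound on $\sum_i\kappa_i^2$ makes $F\geq0$, i.e. the curvature contribution of $M_1$ dominates the gradient contribution. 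Since stability gives $\sum_\alpha I(V_\alpha,V_\alpha)\geq0$ while the computation gives $\leq0$, we conclude $F\equiv0$ and every intermediate inequality is an equality; analysing this equality case (the vanishing of $\nabla^\perp V_\alpha$ and the saturation of the pinching) pins $\Sigma$ down to one of the four rigid families. For the \emph{in particular} statement one runs the same computation directly for $\Sigma\subset M\subset\mathbb{R}^{m+1}$, where there is no second factor to carry a slice or a product. The absence of an $M_2$-direction both changes the dimension bookkeeping in the eigenvalue estimate (the relevant count becomes $m+1$, yielding the threshold $\tfrac{1}{\sqrt{m+1}}$) and removes every equality case, so $F>0$ on the nonempty compact $\Sigma$; then $\sum_\alpha I(V_\alpha,V_\alpha)<0$ contradicts stability, and no stable compact minimal submanifold of $M$ can exist.

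The main obstacle is the curvature computation together with the sharp eigenvalue estimate. Concretely: organizing the normal-bundle decomposition of $\Sigma$ inside the product $M_1\times M_2$, correctly propagating the $\alpha$-sums through the Gauss equation and the integration by parts so that the competing terms assemble into a single quadratic $F$, and then extracting from the pinching the exact inequality that makes $F$ nonnegative with threshold $\tfrac{1}{\sqrt{m_1-1}}$ (respectively strictly positive with threshold $\tfrac{1}{\sqrt{m+1}}$ in the pure-hypersurface case). The final subtlety is a sufficiently clean analysis of the equality case to recover all four families (1)--(4), and to verify that none of them survives inside a single hypersurface, which is what upgrades the classification to an outright non-existence result.
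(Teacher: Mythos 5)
Your outline follows essentially the same route as the paper: test sections obtained by projecting the constant vector fields of $\mathbb{R}^{m_1+1}$ onto $T^{\perp}\Sigma\subset TM$, the Gauss equation to turn $K_{M_1}$ into products of principal curvatures, the pinching-derived bound $\kappa_r^2\leq 1/\varepsilon^2=\sqrt{m_1-1}$ (this is exactly \eqref{eigen-ineq}), a sign analysis of the summed second variation, and an equality analysis patterned on Torralbo--Urbano. However, there is a genuine gap at what you call the heart of the matter. You predict that $\sum_\alpha I(V_\alpha,V_\alpha)=-\int_\Sigma F$ with $F$ quadratic in the principal curvatures of $M_1$ \emph{and in the second fundamental form of $\Sigma$}, and you then claim the pinching is calibrated so that ``the curvature contribution of $M_1$ dominates the gradient contribution.'' If $F$ really retained a dependence on the second fundamental form $h$ of $\Sigma$, no pinching hypothesis on $K_{M_1}$ could ever force $F\geq 0$: nothing in the theorem bounds $h$, and a minimal $\Sigma$ can be arbitrarily curved. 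The actual mechanism is an exact cancellation, not a domination: the term $\mathfrak{B}(N_U)=\sum_i h(e_i,A_{N_U}e_i)$ of the Jacobi operator is cancelled by a matching term produced by $\Delta^{\perp}N_U$ (compare \eqref{laplace} with \eqref{JNU}), and the remaining $h$-term $\sum_{i,j}\langle \tilde{h}(e_i,e_j),U\rangle h(e_i,e_j)$ dies after summing over the full basis $\{E_A\}$ because $\tilde{h}$ is normal to $M$ while $h$ is tangent to $M$. What survives, \eqref{sum}, involves only $B_1$, the second fundamental form of $M_1$ in Euclidean space; the ``gradient'' piece that the curvature must dominate is the $-2\|B_1(e_i^1,e_\beta^1)\|^2$ term, not anything involving $h$. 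Identifying this cancellation is the step that makes the whole method (and your sign argument) possible, and your proposal passes over it.

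Two further inaccuracies are worth flagging. First, the threshold $\tfrac{1}{\sqrt{m+1}}$ in the hypersurface case does not come from a changed ``count'' in the eigenvalue estimate (that estimate is unchanged); it comes from the endgame algebra: with $M_2$ a point the product structure forces $A=\bigl(\langle e_i,Pe_j\rangle\bigr)=I_n$, and then the lower bound \eqref{F-formula} reduces to a multiple of $(m-n)\bigl(m+1-\varepsilon^{-4}\bigr)$, which is nonnegative precisely when $\varepsilon^2\geq\tfrac{1}{\sqrt{m+1}}$; moreover $F>0$ pointwise is not automatic from ``the absence of equality cases'' but is what the equality analysis proves (equality would force $n=m$, contradicting $n<m$). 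Second, the equality analysis in the main theorem is not a formality: it requires the product structure $P$, the matrices $A$ and $B$ of Lemma \ref{lem}, a Cauchy--Schwarz argument on $\mathrm{tr}\,A^2$, and a case split $n\leq m_1$ versus $n>m_1$ (in the latter case one first splits off $\ker d\phi$, where $P=-\mathrm{Id}$), yielding the three possibilities $A=I_n$ with $n=m_1$, $A=-I_n$, and $A=-I_{n-m_1}\oplus I_{m_1}$ that feed into the Torralbo--Urbano classification. Your sketch defers all of this to ``analysing the equality case,'' which is where most of the actual work of the proof lies.
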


\begin{rem}
Let $M_1$ be an ellipsoid (see \cite{SH01}) in the Euclidean space $\mathbb{R}^{m_1+1}:$
$$
M_1=\{(x_1,\ldots,x_{m_1+1})\in\mathbb{R}^{m_1+1}: \frac{x_1^2}{a_1^2}+\cdots+\frac{x_{m_1+1}^2}{a_{m_1+1}^2}=1\},
$$
with $0<a_1\leq a_2\leq \cdots\leq a_{m_1+1}$,
then the minimal and maximal principal curvatures of $M_1$ in $\mathbb{R}^{m_1+1}$ are given by
$$\lambda_{\min}=\frac{a_1}{a_{m_1+1}^2},~\lambda_{\max}=\frac{a_{m_1+1}}{a_1^2}.$$
Hence by choosing suitable $a_i$, we obtain a family of examples of $M_1$ which satisfy the conditions  in Theorem \ref{thm1.4}.
\end{rem}

\section{Preliminaries}
In this section, we consider $M_1$ as a
submanifold of the Euclidean space, and we get a key lemma for the proof of Theorem \ref{thm1.4}.
We use the same idea as in \cite{LS73}, \cite{O86m}, \cite{Simon68} and \cite{TU10}, etc., that is, taking the normal components of parallel vector fields of the Euclidean space where $M_1$ sits as the test sections.

We make the following convention on the ranges of indices:
\begin{equation*}
\begin{array}{rll}
1\leq i, j, k \leq n;&
n+1\leq\alpha, \beta, \gamma \leq n+p=m_1+m_2;\\
1\leq r, s, t \leq m_1;&
m_1+1\leq \mu, \nu \leq N_1.
\end{array}
\end{equation*}

Let $\Phi=(\phi, \psi): \Sigma\to M=M_1\times M_2$ be a minimal immersion of an $n$-dimensional compact manifold $\Sigma~(n\geq 2)$ into a Riemannian
 product manifold $M=M_1\times M_2$, where $M_i$ is an $m_i$-dimensional $(i=1, 2)$  Riemannian manifold and $m_1+m_2> n$. Denote by $p=\mbox{codim}(\Sigma)=m_1+m_2-n.$
We choose a local orthonormal frame $\{e_1,\ldots,e_{n+p}\}$ in $M$ such that $\{e_1,\ldots,e_n\}$ is an orthonormal frame in $T\Sigma\subset TM$ and $\{e_{n+1},\ldots,e_{n+p}\}$ is an orthonormal frame in $T^{\perp}\Sigma\subset TM.$
Since $\Phi$ is a minimal immersion,
it is well-known that the Jacobi operator $J$ of the second variation is a strongly elliptic operator acting on the sections of the normal
bundle of $\Phi$. For any $\eta\in\Gamma(T^{\bot}\Sigma)$, $J$ is given by (see \cite{TU10})

$$
J\eta=-(\Delta^{\bot}+\mathfrak{B}+\mathfrak{R})\eta,
$$
where
$$
\Delta^{\bot}=\sum_{i=1}^n\{\nabla^{\bot}_{e_i}\nabla^{\bot}_{e_i}-
\nabla^{\bot}_{\nabla_{e_i}e_i}\},\quad
\mathfrak{B}(\eta)=\sum_{i=1}^nh(e_i,A_{\eta}e_i),\quad \mathfrak{R}(\eta)=\sum_{i=1}^n(\bar{R}(\eta,e_i)e_i)^{\bot}.
$$
Here $\nabla^{\bot}$ is the normal connection, $h$ is the second fundamental form of $\Phi$, $A_{\eta}$ is the shape operator of $\Phi$,
 $\bar{R}$ is  the curvature tensor on $M=M_1\times M_2$ and $\bot$ denotes normal component. Then the immersion $\Phi$ is {\it stable} if and only if
$$
Q(\eta)=\int_{\Sigma}\langle J\eta,\eta\rangle\,d\Sigma\geq 0,\quad \forall ~\eta\in\Gamma(T^{\bot}\Sigma).
$$

Now let $M_1$ be an $m_1$-dimensional compact submanifold in the Euclidean space $\R$, and $M_2$ be any Riemannian manifold. We have the following immersions:
$$\Sigma\to M=M_1\times M_2\to \mathbb{R}^{N_1}\times M_2.$$
For any tangent vector $v$ at a point $p=(p_1, p_2)\in M_1\times M_2$, we have the decomposition $v=(v^1, v^2)$, where $v^i$ is tangent to $M_i$ at $p_i$.
Let $\{\et_{\mu}\}=\{\et_{m_1+1},\ldots,\et_{N_1}\}$ be an orthonormal frame in $T^{\perp}M_1 \subset T\R$. For convenience, we identify the tangent vector $u\in TM_1$ with $(u,0)\in T^{\perp}M\subset T(\R\times M_2).$
We denote by
$\nabla$ , $\bn$  and $D$ the Levi-Civita connections on $\Sigma$, $M$ and $\R\times M_2$, respectively.
Let $h$, $\th$ and $B_1$ denote the second fundamental forms of $\Sigma\to M$, $M\to\mathbb{R}^{N_1}\times M_2$ and $M_1\to\mathbb{R}^{N_1}$,
respectively. Denote by $R^1$ the curvature tensor on $M_1$, the Gauss equation of $M_1$ in $\R$ is given by
\begin{equation}
\label{Gauss}
\l R^1(X,Y)Z,W\r=\l B_1(X,W),B_1(Y,Z)\r-\l B_1(X,Z),B_1(Y,W)\r.
\end{equation}

For a fixed vector $U\in \mathbb{R}^{N_1},$ by identifying $U$ with $(U, 0)\in T(\R\times M_2)$, we decompose $U$ as follows:
$$U=T_U+N_U+\sum_{\mu=1}^{N_1}\l U, \et_{\mu}\r \et_{\mu},$$
where $T_U=\sum\limits_{j=1}^n\l U, e_j\r e_j$ is tangent to $\Sigma$ and $N_U=\sum\limits_{\beta=n+1}^{n+p}\l U, e_{\beta}\r e_{\beta}$ is normal to $\Sigma$ in $M$.
By deriving the fixed vector $U\in \mathbb{R}^{N_1}$ with respect to $e_i$, we obtain that
\begin{equation}
\begin{aligned}\label{DeiU}
0=&D_{e_i}U
=D_{e_i}T_U+D_{e_i}N_U+D_{e_i}(\sum_{\mu}\l U, \et_{\mu}\r \et_{\mu})\\
=&\bn_{e_i}T_U+\th(e_i, T_U)+\bn_{e_i}N_U+\th(e_i, N_U)+\sum_{\mu}\l U, D_{e_i}\et_{\mu}\r \et_{\mu}+\sum_{\mu}\l U, \et_{\mu}\r D_{e_i}\et_{\mu}\\
=&\nabla_{e_i}T_U+h(e_i, T_U)+\th(e_i, T_U)+\nabla^{\perp}_{e_i}N_U-A_{N_U}(e_i)+\th(e_i, N_U)\\
&+\sum_{\mu}\l U, D_{e_i}\et_{\mu}\r \et_{\mu}+\sum_{\mu}\l U, \et_{\mu}\r D_{e_i}\et_{\mu}.
\end{aligned}
\end{equation}
Taking the tangent and normal parts of \eqref{DeiU} respectively, we obtain that
\begin{equation}
\begin{aligned}\label{tan}
\nabla_{e_i}T_U=A_{N_U}(e_i)-\sum_{j, \mu}\l U, \et_{\mu}\r \l D_{e_i}\et_{\mu}, e_j\r e_j
=A_{N_U}(e_i)+\sum_{j}\l \th(e_i,e_j),U\r e_j,
\end{aligned}
\end{equation}
\begin{equation}
\begin{aligned}\label{nor}
\nabla^{\perp}_{e_i}N_U=&-h(e_i, T_U)-\sum_{\beta, \mu}\l U, \et_{\mu}\r \l D_{e_i}\et_{\mu}, e_{\beta}\r e_{\beta}\\
=&-h(e_i, T_U)+\sum_{\beta, \mu}\l U, \et_{\mu}\r \l \th(e_i, e_{\beta}),\et_{\mu}\r e_{\beta}\\
=&-h(e_i, T_U)+\sum_{\beta, \mu}\l U, \et_{\mu}\r \l B_1(e_i^1, e_{\beta}^1),\et_{\mu}\r e_{\beta}.
\end{aligned}
\end{equation}
We choose a local orthonormal frame $\{e_1,\ldots,e_n\}$ in a neighborhood of the point $p$ such that $\nabla_{e_i}e_j(p)=0$. By deriving \eqref{nor} again, using \eqref{tan}-\eqref{nor}, the Codazzi equation and the minimality, we have
\begin{equation}\label{laplace}
\begin{aligned}
\Delta^{\bot}N_U
=&\sum\limits_i(\bar{R}(T_U,e_i)e_i)^{\bot}-\sum\limits_ih(e_i,A_{N_U}e_i)-\sum\limits_{i,j}\l \th(e_i,e_j),U\r h(e_i,e_j)\\
+&\sum_{i,\beta,\mu}\l U,D_{e_i}\et_{\mu}\r\l B_1(e_i^1, e_{\beta}^1),\et_{\mu}\r e_{\beta}+\sum_{i,\beta,\mu}\l U, \et_{\mu}\r\nabla^{\perp}_{e_i}(\l B_1(e_i^1, e_{\beta}^1),\et_{\mu}\r e_{\beta}).
\end{aligned}
\end{equation}
From the definition of the Jacobi operator $J$ and \eqref{laplace}, we obtain that
\begin{equation}\label{JNU}
\begin{aligned}
-JN_U=&\sum\limits_i(\bar{R}(T_U+N_U,e_i)e_i)^{\bot}-\sum\limits_{i,j}\l \th(e_i,e_j),U\r h(e_i,e_j)\\
+&\sum_{i,\beta,\mu}\l U,D_{e_i}\et_{\mu}\r\l B_1(e_i^1, e_{\beta}^1),\et_{\mu}\r e_{\beta}+\sum_{i,\beta,\mu}\l U, \et_{\mu}\r\nabla^{\perp}_{e_i}(\l B_1(e_i^1, e_{\beta}^1),\et_{\mu}\r e_{\beta}).
\end{aligned}
\end{equation}
On the other hand, since $T_U+N_U\in TM_1$, from the Gauss equation \eqref{Gauss}, we have
\begin{equation}\label{RU2}
\begin{aligned}
&\sum\limits_i(\bar{R}(T_U+N_U,e_i)e_i)^{\bot}\\=&\sum_{i,\beta}\l\bar{R}(T_U+N_U, e_i)e_i, e_{\beta}\r e_{\beta}
=\sum_{i,\beta}\l R^{1}(T_U+N_U, e_i^1)e_i^1, e_{\beta}^1\r e_{\beta}\\
=&\sum_{i,\beta}\Big(\l B_1(T_U+N_U, e_{\beta}^1), B_1(e_{i}^1, e_{i}^1)\r -\l B_1(T_U+N_U, e_{i}^1), B_1(e_{\beta}^1, e_{i}^1)\r \Big)e_{\beta}\\
=&\sum_{i, 1\leq A\leq n+p,\beta}\l U, e_A^1\r\Big(\l B_1(e_A^1, e_{\beta}^1), B_1(e_{i}^1, e_{i}^1)\r -\l B_1(e_A^1, e_{i}^1), B_1(e_{\beta}^1, e_{i}^1)\r\Big)e_\beta,
\end{aligned}
\end{equation}
where we used the fact that $\{e_1^1,\ldots,e_{n+p}^1\}$ is a spanning set of $T_{p_1}M_1$.

Let $\{ E_1, \cdots, E_{N_1}\}$ be an orthonormal basis of $\R$. We define  $$F=-\sum\limits_{A=1}^{N_1}\l N_{E_A}, JN_{E_A}\r,$$ from \eqref{JNU} and \eqref{RU2} we obtain that
\begin{equation}\label{sum}
\begin{aligned}
F
=&\sum_{i, 1\leq A\leq n+p,\beta}\l  e_A^1,e_\beta^1\r\Big(\l B_1(e_A^1, e_{\beta}^1), B_1(e_{i}^1, e_{i}^1)\r -\l B_1(e_A^1, e_{i}^1), B_1(e_{\beta}^1, e_{i}^1)\r\Big)\\
&+\sum_{i,\beta,\mu}\l e_{\beta},D_{e_i}\et_{\mu}\r\l B_1(e_i^1, e_{\beta}^1),\et_{\mu}\r \\
=&\sum_{i,\beta}\Big(\l B_1(e_\beta^1, e_{\beta}^1), B_1(e_{i}^1, e_{i}^1)\r -\l B_1(e_\beta^1, e_{i}^1), B_1(e_{\beta}^1,e_{i}^1)\r-||B_1(e_i^1, e_{\beta}^1)||^2\Big)\\
=&\sum_{i,\beta}\Big(\l B_1(e_\beta^1, e_{\beta}^1), B_1(e_{i}^1, e_{i}^1)\r-2||B_1(e_i^1, e_{\beta}^1)||^2\Big).
\end{aligned}
\end{equation}

When $M_1$  is a compact submanifold in a Euclidean sphere $\mathbb{S}^{N_1}(c)$ with constant sectional curvature $c>0$, we can still consider $M_1$ as a submanifold in the Euclidean space, as $\mathbb{S}^{N_1}(c)$ is a totally umbilical submanifold in the Euclidean space with unit normal $\nu$.
We have $\Phi=(\phi,\psi):\Sigma\to M_1\times M_2\to \mathbb{S}^{N_1}(c)\times M_2\to \mathbb{R}^{N_1+1}\times M_2$.
We denote by $B_1$ the second fundamental form of the immersion $f_1:M_1\to \mathbb{S}^{N_1}(c)$ and $B_0$ the second fundamental form of $M_1\to \mathbb{R}^{N_1+1}$.
We have
\begin{equation}\label{B0B1}
B_0(X,Y)=B_1(X,Y)+\sqrt{c}\langle X,Y\rangle\nu,~\forall ~X,~Y\in TM_1.\end{equation}
From  \eqref{sum} and \eqref{B0B1}, we get
\begin{equation}\label{sum2}
\begin{aligned}
F=\sum_{i,\beta}\Big(\l B_1(e_\beta^1, e_{\beta}^1), B_1(e_{i}^1, e_{i}^1)\r-2||B_1(e_i^1, e_{\beta}^1)||^2+c|e_i^1|^2|e_\beta^1|^2-2c\l e_{i}^1, e_{\beta}^1\r^2\Big).
\end{aligned}
\end{equation}
Hence from the definition of stability, we obtain the following key Lemma:
\begin{lem}\label{lem2.1}
Let $\Sigma$ be a compact minimal submanifold in a Riemannian product manifold $M=M_1\times M_2$, where $M_1$ is an $m_1$-dimensional
compact submanifold in a real space form $\mathbb{R}^{N_1}(c)~(c\geq 0)$, and $M_2$ is any Riemannian manifold. $F$ is given by \eqref{sum2}. If  $F>0$, for any
local orthonormal frame $\{e_1,\ldots,e_{n+p}\}$ on $M$, where $\{e_1,\ldots,e_n\}=\{e_i\}$ is tangent to $\Sigma$ and $\{e_{n+1},\ldots,e_{n+p}\}=\{e_\alpha\}$
is normal to $\Sigma$, then $\Sigma$ is unstable.
If $\Sigma$ is stable, then we have $\int_{\Sigma}Fd{\Sigma}\leq0$.
\end{lem}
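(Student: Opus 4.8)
The plan is to feed the normal projections of the ambient parallel vector fields into the stability inequality and then sum over an orthonormal basis. Recall that for the fixed orthonormal basis $\{E_1,\dots,E_{N_1}\}$ of the Euclidean space (respectively $\{E_1,\dots,E_{N_1+1}\}$ when $c>0$), each normal projection $N_{E_A}=\sum_{\beta}\langle E_A,e_\beta\rangle e_\beta$ is a globally defined smooth section of $T^\perp\Sigma$, hence an admissible test field for $Q$. The computations \eqref{JNU}--\eqref{sum} (and \eqref{B0B1}--\eqref{sum2} in the spherical case), which already incorporate the Gauss and Codazzi equations together with minimality, establish the pointwise identity
$$F=-\sum_A\langle N_{E_A},JN_{E_A}\rangle.$$

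I would then integrate this identity over $\Sigma$ and interchange the finite sum with the integral, obtaining
$$\int_\Sigma F\,d\Sigma=-\sum_A\int_\Sigma\langle N_{E_A},JN_{E_A}\rangle\,d\Sigma=-\sum_A Q(N_{E_A}),$$
where the last equality is just the definition $Q(\eta)=\int_\Sigma\langle J\eta,\eta\rangle\,d\Sigma$ together with the symmetry of the metric. Thus the integral of $F$ is, up to sign, the total second variation along the distinguished family of test sections $\{N_{E_A}\}$.

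The two assertions then follow at once. If $\Sigma$ is stable, then $Q(\eta)\geq 0$ for every $\eta\in\Gamma(T^\perp\Sigma)$; applying this to each $\eta=N_{E_A}$ and summing yields $\sum_A Q(N_{E_A})\geq 0$, hence $\int_\Sigma F\,d\Sigma\leq 0$, which is the second conclusion. For the first conclusion I would argue contrapositively: if $F>0$ pointwise, then $\int_\Sigma F\,d\Sigma>0$, so $\sum_A Q(N_{E_A})<0$, which forces $Q(N_{E_A})<0$ for at least one index $A$; the existence of a normal section with negative second variation means $\Sigma$ is unstable.

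The only point demanding care --- and where all the real work resides --- is the verification of the pointwise identity $F=-\sum_A\langle N_{E_A},JN_{E_A}\rangle$, and with it the fact that the expression \eqref{sum2} is genuinely independent of the adapted orthonormal frame $\{e_i,e_\alpha\}$, being a trace-type quantity summed over the fixed ambient basis. This is exactly what \eqref{JNU}--\eqref{sum2} supply, where one must correctly account for the ambient dimension and the umbilicity relation \eqref{B0B1} so that the curvature terms $c|e_i^1|^2|e_\beta^1|^2-2c\langle e_i^1,e_\beta^1\rangle^2$ enter with the right coefficients in the spherical case. Granting that identity, the averaging argument above closes the proof with no further analysis.
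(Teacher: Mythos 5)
Your proposal is correct and follows exactly the paper's argument: the lemma is stated right after the derivation of the identity $F=-\sum_{A}\langle N_{E_A},JN_{E_A}\rangle$ in \eqref{JNU}--\eqref{sum2}, and the paper likewise concludes by integrating and invoking the definition of stability, so that stability forces $\int_\Sigma F\,d\Sigma\leq 0$ and pointwise $F>0$ forces some test section $N_{E_A}$ to have negative second variation. Your explicit averaging step (interchanging the finite sum with the integral and extracting one index $A$ with $Q(N_{E_A})<0$) is precisely what the paper leaves implicit in the phrase ``from the definition of stability, we obtain the following key Lemma.''
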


\begin{rem}
In Lemma \ref{lem2.1},
if $M_2$ vanishes, then the ambient space is a compact submanifold $M$ in  a real space form $\mathbb{R}^{n}(c)$ with constant sectional curvature
$c ~(c\geq 0)$, we have $e_{\alpha}^1=e_{\alpha},~ e_i^1=e_i$. Denote by $B$ the second fundamental form
of the immersion $M\to \mathbb{R}^{n}(c)$, then
\begin{align*}
F=\sum_{i, \beta}\big\{\l B(e_i, e_i), B(e_{\beta}, e_{\beta})\r-2|| B(e_i, e_{\beta})||^2\big\}+n(m-n)c.
\end{align*}
Therefore, when $M_2$ vanishes, Lemma \ref{lem2.1} becomes a well-known result due to Lawson and Simons \cite{LS73}.
\end{rem}

As we mentioned in the beginning of this section, for any tangent vector $v$ at a point $(p_1, p_2)\in M_1\times M_2$, we have the decomposition $v=(v^1, v^2)$, where $v^i$ is tangent to $M_i$ at $p_i$. The
product structure $P$ on the product space $M=M_1\times M_2$ is then defined by (see \cite{TU10})
$$P(v)=P(v^1, v^2)=(v^1, -v^2),~\forall~ v=(v^1, v^2)\in T_{(p_1,p_2)}M_1\times M_2.$$
It is clear that $P$ is a linear isometry; $P$ is parallel (i.e. $\nabla P=0$); $P^2=Id$ and $\tr P=m_1-m_2$.
From the definition of $P$, it is easy to see that $e_{i}^1=\frac{1}{2}(e_i+Pe_i)$ and $e_{\alpha}^1=\frac{1}{2}(e_{\alpha}+Pe_{\alpha})$. Hence we obtain the following lemma.
\begin{lem}\label{lem}{\rm(see \cite{TU10})}
Denote by $A$ the square $n$-matrix $A=(a_{ij})=\big(\l e_i, Pe_j\r\big)$ and by $B$ the square $p$-matrix $B=(b_{\alpha\beta})=\big(\l e_{\alpha}, Pe_{\beta}\r\big)$,
we have
\begin{enumerate}
\item $\l e_i^1, e_j^1\r=\frac{1}{2}(\delta_{ij}+a_{ij}),\quad \l e_{\alpha}^1, e_{\beta}^1\r=\frac{1}{2}(\delta_{\alpha\beta}+b_{\alpha\beta}), \quad \l e_i^1, e_{\beta}^1\r=\frac{1}{2}\l e_i, Pe_{\beta}\r.$
\item {\rm$\sum\limits_i|e_i^1|^2=\frac{1}{2}(n+\tr A), \quad\sum\limits_{\beta}|e_{\beta}^1|^2=\frac{1}{2}(p+\tr B).$}
\item  {\rm$\sum\limits_{i,j}\l e_i^1, e_j^1\r^2=\frac{1}{4}(n+2\tr A+\tr A^2), \quad \sum\limits_{\alpha,\beta}\l e_{\alpha}^1, e_{\beta}^1\r^2=\frac{1}{4}(p+2\tr B+\tr B^2).$}
\item {\rm$\tr A+\tr B=2m_1-n-p.$}
\end{enumerate}
\end{lem}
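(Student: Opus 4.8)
The plan is to reduce everything to the single relation $e_i^1=\frac{1}{2}(e_i+Pe_i)$ and $e_{\alpha}^1=\frac{1}{2}(e_{\alpha}+Pe_{\alpha})$ recorded just before the statement, together with the three structural properties of $P$ noted there: $P$ is a linear isometry, $P^2=Id$, and $\tr P=m_1-m_2$. The first preliminary step I would carry out is to observe that these properties force $P$ to be self-adjoint: since $P$ is an isometry one has $\langle Pu, Pv\rangle=\langle u,v\rangle$, and applying this with $v$ replaced by $Pv$ while using $P^2=Id$ gives $\langle Pu,v\rangle=\langle Pu, P(Pv)\rangle=\langle u, Pv\rangle$. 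In particular the matrices $A=(\langle e_i, Pe_j\rangle)$ and $B=(\langle e_{\alpha}, Pe_{\beta}\rangle)$ are symmetric, and this symmetry is exactly what is needed later to convert the sums $\sum a_{ij}^2$ and $\sum b_{\alpha\beta}^2$ into traces of squares.

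For part (i), I would expand each inner product through the half-sum formula. Writing
$$
\langle e_i^1, e_j^1\rangle=\tfrac14\big(\langle e_i,e_j\rangle+\langle e_i, Pe_j\rangle+\langle Pe_i, e_j\rangle+\langle Pe_i, Pe_j\rangle\big),
$$
the isometry property collapses $\langle Pe_i, Pe_j\rangle$ to $\delta_{ij}$, self-adjointness identifies both cross terms as $a_{ij}$, and the claimed value $\frac12(\delta_{ij}+a_{ij})$ drops out; the computation for $\langle e_{\alpha}^1, e_{\beta}^1\rangle$ is identical. For the mixed term $\langle e_i^1, e_{\beta}^1\rangle$, the orthogonality of tangent and normal frames kills both $\langle e_i, e_{\beta}\rangle$ and $\langle Pe_i, Pe_{\beta}\rangle$, leaving $\frac12\langle e_i, Pe_{\beta}\rangle$.

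Parts (ii) and (iii) are then immediate consequences of (i). Setting $i=j$ and summing gives $\sum_i|e_i^1|^2=\frac12\sum_i(\delta_{ii}+a_{ii})=\frac12(n+\tr A)$, and likewise for the $B$-block. Squaring the formula in (i) and summing over $i,j$ yields $\frac14\sum_{i,j}(\delta_{ij}^2+2\delta_{ij}a_{ij}+a_{ij}^2)$, where the three terms evaluate to $n$, $2\tr A$, and $\sum_{i,j}a_{ij}^2=\tr A^2$ respectively, the last step invoking the symmetry of $A$ secured in the first step. The analogous manipulation handles the $B$-version.

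For part (iv), I would note that $\{e_1,\ldots,e_{n+p}\}$ is a full orthonormal frame of $M$ precisely because $n+p=m_1+m_2$, so that $\tr A+\tr B=\sum_{A=1}^{n+p}\langle e_A, Pe_A\rangle=\tr P=m_1-m_2$; rewriting $m_2=(n+p)-m_1$ converts this into $2m_1-n-p$. Overall the statement is a routine bookkeeping exercise once the half-sum decomposition is in hand, and I do not expect any serious obstacle. The only point demanding genuine care rather than mechanical expansion is the self-adjointness of $P$ established at the outset, since the trace-of-square identities in (iii) and, implicitly, the symmetry conventions throughout all depend on it.
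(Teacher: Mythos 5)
Your proposal is correct and follows exactly the route the paper intends: the paper itself derives the lemma from the relations $e_i^1=\frac{1}{2}(e_i+Pe_i)$, $e_{\alpha}^1=\frac{1}{2}(e_{\alpha}+Pe_{\alpha})$ together with the properties of $P$ (isometry, $P^2=Id$, $\tr P=m_1-m_2$), leaving the expansion as routine. Your write-up simply supplies those details, including the worthwhile observation that self-adjointness of $P$ (hence symmetry of $A$ and $B$) is what justifies $\sum_{i,j}a_{ij}^2=\tr A^2$ in part (iii).
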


\section{Proof of Theorem \ref{thm1.4}}
In this section, we present the proof of Theorem \ref{thm1.4}.

\begin{proof}[Proof of Theorem \ref{thm1.4}]
First, it is easy to check directly that all the submanifolds in the four cases in Theorem \ref{thm1.4} are stable minimal submainfolds.
In fact, $\Phi_1\times\Phi_2:\Sigma_1\times\Sigma_2\to M_1\times M_2$ is stable if and only if $\Phi_i:\Sigma_i\to M_i, i=1,2$, are stable (see \cite{TU10}
for details).

Conversely, assume that $\Phi=(\phi, \psi): \Sigma\to M=M_1\times M_2$ is a stable minimal immersion, where $M_1$ is a complete connected hypersurface in the Euclidean space $\mathbb{R}^{m_1+1}$, and $M_2$ is any Riemannian manifold. At a given point $p_1\in M_1$,
we choose an orthonormal frame $\{\et_1,\ldots,\et_{m_1},\et_{m_1+1}\}$ in $\mathbb{R}^{m_1+1}$  such that $\{\et_1,\ldots,\et_{m_1}\}$ is a local frame
tangent to $M_1$ and at $p_1\in M_1$, we have
$$B_1(\et_r, \et_s)=\la_r\delta_{rs}\et_{m_1+1}, ~\forall ~1\leq r,s\leq m_1,$$
where $\{\la_r\}$ are the principal curvatures of $M_1$ in $\mathbb{R}^{m_1+1}$ corresponding to the principal directions $\{\et_r\}.$
From the Gauss equation \eqref{Gauss}, at $p_1\in M_1$, we have
\begin{equation}\label{R1}\l R^1(\et_r, \et_s)\et_s,\et_r\r=\la_r\la_s~(r\neq s).\end{equation}

Assume that the sectional curvature $K_{M_1}$ of $M_1$ satisfies the following pinching condition:
$$\ep^2\leq K_{M_1}\leq 1$$
 for some constant $\ep\in(0, 1)$, then from \eqref{R1} we have
 \begin{equation}\label{lars}\ep^2\leq \la_r\la_s\leq 1 (r\neq s).\end{equation}
Without loss of generality, we assume that
\begin{equation}\label{paixu}0<\la_1\leq\cdots\leq\la_{m_1}.\end{equation}
Since $m_1\geq3$, from \eqref{lars}-\eqref{paixu}, it is easy to see that (cf. Lemma 3.2 in \cite{SH01})
\begin{equation}\label{eigen-ineq}\ep^2\leq \la_1\leq 1, \quad\ep\leq \la_2\leq\cdots\leq\la_{m_1-1}\leq 1, \quad\ep\leq \la_{m_1}\leq 1/\ep.\end{equation}
For $1\leq i\leq n, n+1\leq\alpha\leq n+p, 1\leq r,s\leq m_1,$ we denote by
\begin{equation*}
 c_{i}^{r}=\l e_i, \et_r\r=\l e_i^1, \et_r\r, ~ c_{\alpha}^{s}=\ \l e_{\alpha}, \et_s\r=\l e_{\alpha}^1, \et_s\r.
\end{equation*}
Since $\{\et_1,\ldots,\et_{m_1}\}$ is a local orthonormal  basis  of $T_{p_1}M_1$ and $\{e_1^1,\ldots,e_{n+p}^1\}$ is a spanning set of $T_{p_1}M_1$,
we have the following identities:
\begin{equation}\label{relation}
\sum\limits_{i=1}^nc_i^rc_i^s+\sum\limits_{\alpha=n+1}^{n+p} c_\alpha^rc_\alpha^s=\l\et_r,\et_s\r=\delta_{rs},~\sum\limits_{r=1}^{m_1}c_i^rc_j^r=\l e_i^1,e_j^1\r,~\sum\limits_{r=1}^{m_1}c_\alpha^rc_\beta^r=\l e_\alpha^1,e_\beta^1\r.
\end{equation}
From \eqref{sum} and \eqref{relation} we obtain that
\begin{align*}
F&=\sum_{i,\alpha,r, s}\Big(\la_r\la_s( c_{\alpha}^{r})^2( c_{i}^{s})^2-2\la_r\la_s c_{\alpha}^{r} c_{\alpha}^{s} c_{i}^{r} c_{i}^{ s}\Big)\\
&=-\sum_{i,\alpha,r}\la_r^2( c_{\alpha}^{r})^2( c_{i}^{r})^2+\sum_{r\neq s}\la_r\la_s\Big((\sum_i c_{i}^{r} c_{i}^{s})^2+(\sum_{\alpha} c_{\alpha}^{ r} c_{\alpha }^{s})^2+\sum_{i,\alpha}( c_{\alpha}^{r})^2( c_{i}^{s})^2\Big)\\
&=-\sum_{i,\alpha,r}\la_r^2( c_{\alpha}^{r})^2( c_{i}^{r})^2+\frac{1}{2}\sum_{r\neq s}\la_r\la_sG(r, s)\\
&=-\sum_{i,\alpha,r}\la_r^2( c_{\alpha}^{r})^2( c_{i}^{r})^2+\frac{1}{2}\sum_{r\neq s}(\la_r\la_s-\ep^2)G(r, s)+\frac{1}{2}\ep^2\sum_{r\neq s}G(r, s),
\end{align*}
where
\begin{align*}
G(r, s)=2(\sum_i c_{i}^{r} c_{i}^{s})^2+2(\sum_{\alpha} c_{\alpha}^{ r} c_{\alpha}^{ s})^2+\sum_{i,\alpha}(( c_{\alpha}^{ r})^2( c_{i}^{ s})^2+( c_{\alpha}^{ s})^2( c_{i}^{ r})^2).
\end{align*}
Obviously, $G(r, s)=G(s, r)\geq0.$
By \eqref{relation} and Lemma \ref{lem}, we have
\begin{align*}
\frac{1}{2}\sum_{r, s}G(r, s)=&\sum_{i,j,r,s} c_{i}^{r} c_{i}^{s} c_{j}^{r} c_{j}^{s}+\sum_{\alpha,\beta,r,s} c_{\alpha}^{ r} c_{\alpha}^{ s} c_{\beta}^{ r} c_{\beta}^{ s}+\sum_{ \alpha,r}( c_{\alpha}^{ r})^2\sum_{ i,s}( c_{i}^{ s})^2\\
=&\sum_{i, j}\l e_i^1, e_j^1\r^2+\sum_{\alpha,\beta}\l e_{\alpha}^1, e_{\beta}^1\r^2+(\sum_{\alpha}|e_{\alpha}^1|^2)(\sum_i|e_i^1|^2)\\
=&\frac{1}{4}\big[(n+2\tr A+\tr A^2)+(p+2\tr B+\tr B^2)+(\tr A+n)(\tr B+p)\big]\\
=&\frac{1}{4}\big[2(2m_1-n)+2\tr A^2+(\tr A+n)(2m_1-n-\tr A)\big]
\end{align*}
and
\begin{align*}
\frac{1}{2}\sum_rG(r, r)=&\sum_r(\sum_i( c_{i}^{r})^2)^2+\sum_r(\sum_{\alpha}( c_{\alpha}^{ r})^2)^2+\sum_{ i, \alpha,r}( c_{\alpha }^{r})^2( c_{i}^{r})^2\\
=&\sum_r\Big[\big(\sum_i( c_{i}^{r})^2+\sum_{\alpha}( c_{\alpha}^{ r})^2\big)^2-(\sum_{i}( c_{i}^{r})^2)(\sum_{\alpha}( c_{\alpha}^{ r})^2)\Big]\\
=&m_1-\sum_{i, \alpha,r}( c_{\alpha }^{r})^2( c_{i}^{r})^2.
\end{align*}
Hence
\begin{align*}
\frac{1}{2}\ep^2\sum_{r\neq s}G(r, s)=&\frac{\ep^2}{2}\sum_{r, s} G(r, s)-\frac{\ep^2}{2}\sum_rG(r, r)\\
=&\frac{\ep^2}{4}\big[2(2m_1-n)+2\tr A^2+(\tr A+n)(2m_1-n-\tr A)\big]\\
&-m_1\ep^2+\ep^2\sum_{i, \alpha, r}( c_{\alpha}^{ r})^2( c_{i}^{ r})^2,
\end{align*}
which implies that
\begin{equation}
\label{F1-3}
\begin{aligned}
F&=\frac{\ep^2}{4}\big[(n+\tr A)(2m_1-n-\tr A)-2n+2\tr A^2\big ]\\
+&\sum_{i,\alpha, r}(\ep^2-\la_r^2)( c_{\alpha}^{ r})^2( c_{i}^{ r})^2
+\frac{1}{2}\sum_{r\neq s}(\la_r\la_s-\ep^2)G(r, s).
\end{aligned}
\end{equation}
Denote by $$\begin{aligned}F_1&=\frac{\ep^2}{4}\big[(n+\tr A)(2m_1-n-\tr A)-2n+2\tr A^2\big ],\\
F_2&=\sum_{i,\alpha, r}(\ep^2-\la_r^2)( c_{\alpha}^{ r})^2( c_{i}^{ r})^2,~
F_3=\frac{1}{2}\sum_{r\neq s}(\la_r\la_s-\ep^2)G(r, s)\geq 0,
\end{aligned}
$$
we have $F=F_1+F_2+F_3$.
By the Cauchy-Schwarz inequality, we have
$
\tr A^2\geq \frac{(\tr A)^2}{n},
$
and the equality holds if and only if $A=\la I_n$ for certain function $\la$.
This implies
\begin{equation}
\label{F1}
F_1\geq \frac{\ep^2}{4}\left[(n+\tr A)(2m_1-n-\tr A)-2n+\frac{2(\tr A)^2}{n}\right ].
\end{equation}
Since $\la_r\leq 1/\ep,~\forall ~1\leq r\leq m_1$ (see \eqref{eigen-ineq}), we obtain that
\begin{align}
F_2=&\sum_{i, \alpha, r}(\ep^2-\la_r^2)( c_{\alpha}^{ r})^2( c_{i}^{ r})^2\geq
\sum_{i, \alpha, r}(\ep^2-\frac{1}{\ep^2})( c_{\alpha}^{ r})^2( c_{i}^{ r})^2\label{ineq0}\\
=&\sum_{r}(\ep^2-\frac{1}{\ep^2})(1-\sum_j( c_{j}^{ r})^2)\sum_i( c_{i}^{ r})^2\nonumber\\
=&(\ep^2-\frac{1}{\ep^2})\sum_{i,r}( c_{i}^{ r})^2-(\ep^2-\frac{1}{\ep^2})\sum_{r}(\sum_i( c_{i}^{ r})^2)^2\nonumber\\
\geq&(\ep^2-\frac{1}{\ep^2})\frac{\tr A+n}{2}-(\ep^2-\frac{1}{\ep^2})\frac{1}{m_1}(\sum_{i,r}( c_{i}^{ r})^2)^2\label{ineq1}\\
=&(\ep^2-\frac{1}{\ep^2})\frac{\tr A+n}{2}-(\ep^2-\frac{1}{\ep^2})\frac{1}{m_1}(\frac{\tr A+n}{2})^2.\nonumber
\end{align}
So we get
\begin{equation}\label{F-formula}
\begin{aligned}
F\geq&\frac{\ep^2}{4}(\tr A+n)\left[(\frac{2-n}{n}\tr A+2m_1-n-2)\right.\\
&\left.+2(1-\frac{1}{\ep^4})-(1-\frac{1}{\ep^4})\frac{1}{m_1}(\tr A+n)\right]\\
=&\frac{\ep^2}{4}(\tr A+n)f(\tr A),
\end{aligned}
\end{equation}
where $$f(x)=\left(\frac{2-n}{n}-(1-\frac{1}{\ep^4})\frac{1}{m_1}\right)x+2m_1-n-2+(1-\frac{1}{\ep^4})(2-\frac{n}{m_1}).$$

Set $\ep^2=\frac{1}{\sqrt{m_1-1}}.$ We discuss the following two different cases:

\textbf{Case 1}. $n\leq m_1.$
By a direct computation we have
$$f(-n)=0,\quad f(n)=\frac{4}{m_1}(m_1-n)\geq0.$$
Since $-n\leq\tr A\leq n$,  we have $\tr A+n\geq0$ and $f(\tr A)\geq\min\{f(-n), f(n)\}=0,$ which together with \eqref{F-formula} implies that
$F\geq 0$. On the other hand, since $\Sigma$ is stable,
from Lemma \ref{lem2.1} we know that $\int_{\Sigma}F d{\Sigma}\leq 0$, hence we get $F=0$. Then all the equalities hold in the above inequalities. That is, we have $F_3=0$, all the equalities in \eqref{F1}-\eqref{ineq1}  hold and $(\tr A+n)f(\tr A)=0$.

 ``=" in (\ref{F1}) implies that $A=\la I_n, \la\in[-1, 1].$
Note that (see Lemma \ref{lem}) $$\sum_{i,r}( c_{i}^{ r})^2=\sum\limits_i|e_i^1|^2=\frac{n+\tr A}{2}=\frac{n(1+\la)}{2},$$
``=" in (\ref{ineq1}) then implies that
$$\sum_i( c_{i}^{1})^2=\cdots=\sum_i( c_{i}^{m_1})^2=\frac{n(1+\la)}{2m_1}.$$
Since $\la_r\la_s\leq 1 (r\neq s)$, there
exists at least one $r$ ($1\leq r\leq m_1$)  such that $\la_r<\frac{1}{\ep}$, then ``=" in (\ref{ineq0}) implies that for this $r$, we have
$$
\begin{aligned}
0&=(\sum_{\alpha}( c_{\alpha}^{ r})^2)(\sum_{i}( c_{i}^{ r})^2)=(1-\sum_{j}( c_{j}^{ r})^2)(\sum_{i}( c_{i}^{ r})^2)\\
&=\frac{n(1+\la)(2m_1-n-n\la)}{4m_1^2}\geq
\frac{n(1+\la)(2n-n-n\la)}{4m_1^2}\geq0,
\end{aligned}
$$
from which we conclude that $\la^2=1, A=\pm I_{n}$ and $
n=m_1$ if $A=I_{n}$.

So finally we obtain that either  $A=I_n$ and $n=m_1$ or $A=-I_n$.

\textbf{Case 2}. $n>m_1.$ We adapt an argument similar to that in \cite{TU10}.  For any point $x\in\Sigma$, we have that $\dim\ker d\phi_x\geq n-m_1$, hence there exists an $(n-m_1)$-dimensional linear subspace $V_x\subset\ T_x\Sigma$ such that $Pv=-v$ for any $v\in V_x$. Thus, we can decompose $T_x\Sigma=V_x\oplus Z_x$, where $Z_x$ is orthogonal to $V_x$ and $\dim Z_x=m_1$. Then the matrix $A$ can be written as
$$A=\left(\begin{array}{c|c}-I_{n-m_1} & 0 \\&\\\hline \\0 & \,\,\tilde{A}\end{array}\right),$$
 with $\hat{A}_{ij}=\langle Pz_i,z_j\rangle$, and $\{z_1,\dots,z_p\}$ being
an orthonormal basis of $Z_x$. Hence,
\begin{equation}
\label{tildeA}
\tr A=\tr \tilde{A}+m_1-n,\, \tr A^2=\tr \tilde{A}^2+n-m_1,
\end{equation}
and by the Cauchy-Schwarz inequality, we have
\begin{align}\label{Cau2}
\tr \tilde{A}^2\geq\frac{(\tr\tilde{A})^2}{m_1},
\end{align}
where the equality holds if and only if $\tilde{A}=\la I_{m_1}$ for certain function $\la$.
Using an analogous argument to that in Case 1, from \eqref{ineq0}-\eqref{Cau2}, we get
\begin{align*}
F\geq&\frac{\ep^2}{4m_1}(m_1-1-\frac{1}{\ep^4})(m_1^2-(\tr\tilde{A})^2)=0,
\end{align*}
where we used the fact that $\ep^2=\frac{1}{\sqrt{m_1-1}}$.
 On the other hand, since $\Sigma$ is stable,
from Lemma \ref{lem2.1} we know that $\int_{\Sigma}F d{\Sigma}\leq 0$, hence we get $F=0$.
Then all the equalities hold in the above inequalities.
That is, we have that $F_3=0$ and all the equalities in \eqref{ineq0},  \eqref{ineq1} and \eqref{Cau2} hold.

``=" in (\ref{Cau2}) implies $\tilde{A}=\la I_{m_1}.$ Since  $-m_1\leq \tr \tilde{A}\leq m_1,$ we have  $-1\leq\la\leq1$.
Note that (see Lemma \ref{lem}) $$\sum_{i,r}( c_{i}^{ r})^2=\sum\limits_i|e_i^1|^2=\frac{n+\tr A}{2}=\frac{m_1+\tr \tilde{A}}{2}=\frac{m_1(1+\la)}{2},$$
``=" in (\ref{ineq1}) then implies that
$$\sum_i(c_{i}^{1})^2=\cdots=\sum_i(c_{i}^{m_1})^2=\frac{1+\la}{2}.$$
Since $\la_r\la_s\leq 1 (r\neq s)$, there
exists at least one $r$ ($1\leq r\leq m_1$)  such that $\la_r<\frac{1}{\ep}$, then ``=" in (\ref{ineq0}) implies that for this $r$, we have
$$0=(\sum_{\alpha}( c_{\alpha}^{ r})^2)(\sum_i( c_{i}^{ r})^2)=(1-\sum_{i}( c_{i}^{ r})^2)(\sum_i( c_{i}^{ r})^2)=\frac{1-\la^2}{4}.$$
So we get that $\la^2=1, \tilde{A}=\pm I_{m_1}.$
Finally, we get that either $A=-I_n$ or $A=-I_{n-m_1}\oplus I_{m_1}.$

In summary, if $\Psi$ is stable, we obtain that the matrix $A$ has only three possibilities:
$${\rm(i)}A=I_{n}\mbox{ and } n=m_1,  \quad {\rm(ii)} A=-I_n, \quad {\rm(iii)} A=-I_{n-m_1}\oplus I_{m_1}.$$
After an analogous argument to that in \cite{TU10}, we obtain that (i) $A=I_{n}\mbox{ and } n=m_1$ leads to case (1) in Theorem \ref{thm1.4}, (ii) $A=-I_n$ leads to cases (2) and (3) in Theorem \ref{thm1.4}, and we derive case (4) in Theorem \ref{thm1.4} from (iii)$A=-I_{n-m_1}\oplus I_{m_1}$.
Hence we  have proved the first part of Theorem \ref{thm1.4}.

In particular, when the ambient space is  an $m$-dimensional $(m\geq 3)$ complete   hypersurface $M$  in  $\mathbb{R}^{m+1}$,
 we regard $M$ as a Riemannian product of $M_1=M$ and a point $p_2$. We use proof by contradiction.
 Assume that $\Sigma$ is a stable minimal submanifold in $M$,
we use the same notations as in \eqref{F1-3}, in this case, it is easy to see that $\dim M=m=m_1>n$ and $A=I$.  Set $\ep^2=\frac{1}{\sqrt{m+1}}$, from \eqref{F-formula} we obtain that
$F\geq 0$. Using a same argument as in the proof of Case 1 above, we obtain that $n=m$, which contradicts the fact that $\dim M=m>n$.
Hence, we conclude that in this case there exist no stable minimal submanifolds in $M$.
This completes the proof of Theorem \ref{thm1.4}.
\end{proof}

\bibliographystyle{amsplain}
\bibliography{ref-stable-min}

\providecommand{\bysame}{\leavevmode\hbox to3em{\hrulefill}\thinspace}
\providecommand{\MR}{\relax\ifhmode\unskip\space\fi MR }
\providecommand{\MRhref}[2]{%
  \href{http://www.ams.org/mathscinet-getitem?mr=#1}{#2}
}
\providecommand{\href}[2]{#2}
\begin{thebibliography}{10}

\bibitem{FC}
D.~Fischer-Colbrie, \emph{On complete minimal surfaces with finite {M}orse
  index in three-manifolds}, Invent. Math. \textbf{82} (1985), no.~1, 121--132.
  \MR{808112 (87b:53090)}

\bibitem{FCS}
D.~Fischer-Colbrie and R.~Schoen, \emph{The structure of complete stable
  minimal surfaces in {$3$}-manifolds of nonnegative scalar curvature}, Comm.
  Pure Appl. Math. \textbf{33} (1980), no.~2, 199--211. \MR{562550 (81i:53044)}

\bibitem{LS73}
H.~B. Lawson and J.~Simons, \emph{On stable currents and their application to
  global problems in real and complex geometry}, Ann. of Math. (2) \textbf{98}
  (1973), 427--450. \MR{0324529 (48 \#2881)}

\bibitem{O}
Y.-G. Oh, \emph{Second variation and stabilities of minimal {L}agrangian
  submanifolds in {K}\"ahler manifolds}, Invent. Math. \textbf{101} (1990),
  no.~2, 501--519. \MR{1062973 (91f:58022)}

\bibitem{O86m}
Y.~Ohnita, \emph{Stable minimal submanifolds in compact rank one symmetric
  spaces}, Tohoku Math. J. (2) \textbf{38} (1986), no.~2, 199--217. \MR{843807
  (87j:53090)}

\bibitem{R}
A.~Ros, \emph{One-sided complete stable minimal surfaces}, J. Differential
  Geom. \textbf{74} (2006), no.~1, 69--92. \MR{2260928 (2007g:53008)}

\bibitem{SH01}
Y.~B. Shen and Q.~He, \emph{On stable currents and positively curved
  hypersurfaces}, Proc. Amer. Math. Soc. \textbf{129} (2001), no.~1, 237--246
  (electronic). \MR{1784025 (2001i:53097)}

\bibitem{Simon68}
J.~Simons, \emph{Minimal varieties in riemannian manifolds}, Ann. of Math. (2)
  \textbf{88} (1968), 62--105. \MR{0233295 (38 \#1617)}

\bibitem{TU10}
F.~{Torralbo} and F.~{Urbano}, \emph{{On stable compact minimal submanifolds}},
  ArXiv:1012.0679v1[math.DG], Proc. Amer. Math. Soc., to appear.

\bibitem{U90}
F.~Urbano, \emph{Minimal surfaces with low index in the three-dimensional
  sphere}, Proc. Amer. Math. Soc. \textbf{108} (1990), no.~4, 989--992.
  \MR{1007516 (90h:53073)}

\bibitem{U93}
\bysame, \emph{Index of {L}agrangian submanifolds of {${\bf C}{\rm P}\sp n$}
  and the {L}aplacian of {$1$}-forms}, Geom. Dedicata \textbf{48} (1993),
  no.~3, 309--318. \MR{1254100 (94k:53077)}

\bibitem{U07}
\bysame, \emph{Hamiltonian stability and index of minimal {L}agrangian surfaces
  of the complex projective plane}, Indiana Univ. Math. J. \textbf{56} (2007),
  no.~2, 931--946. \MR{2317551 (2008b:53084)}

\end{thebibliography}

\end{document}